\documentclass{amsart}
\usepackage{amssymb, amsmath, amsthm, graphics, comment, xspace, enumerate, xcolor}
\baselineskip 18pt

\vfuzz2pt 
\hfuzz2pt 
\newtheorem{thm}{Theorem}[section]

\newtheorem{lem}[thm]{Lemma}
\newtheorem{prop}[thm]{Proposition}
\theoremstyle{definition}
\newtheorem{defn}[thm]{Definition}

\theoremstyle{remark}
\newtheorem{rem}[thm]{Remark}
\numberwithin{equation}{section}


\begin{document}

\title[Doss almost periodic functions, Besicovitch-Doss almost...]{Doss almost periodic 
functions, Besicovitch-Doss almost periodic functions and convolution products}


\author{Marko Kosti\' c}
\address{Faculty of Technical Sciences,
University of Novi Sad,
Trg D. Obradovi\' ca 6, 21125 Novi Sad, Serbia}
\email{marco.s@verat.net}

\begin{abstract}
In the paper under review, we analyze the invariance of Doss almost periodicity and Besicovitch-Doss almost periodicity under the actions of convolution products. 
We thus continue our recent research studies \cite{fedorov-novi} and \cite{NSJOM-besik} by investigating the case in which the solution operator family $(R(t))_{t>0}$ under our consideration has special growth rates
at zero and infinity. In contrast to \cite{NSJOM-besik}, the results obtained in this paper can be incorporated in the qualitative analysis of solutions to abstract (degenerate) inhomogeneous fractional differential equations in Banach spaces.
\end{abstract}

\subjclass[2010]{Primary 43A60; Secondary 47D06}

\keywords{Doss almost periodic functions, Besicovitch-Doss almost periodic functions, convolution products.}

\maketitle

\section{Introduction and preliminaries}\label{cure}

Let $(X,\|\cdot\|)$ and $(Y,\|\cdot\|)_{Y}$ be two non-trivial complex Banach spaces, and let $L(X,Y)$ be the space consisting of all linear continuous operators from $X$ into $Y;$ $L(X)\equiv L(X,X).$  By $\|\cdot \|_{L(X,Y)}$ we denote the norm in $L(X,Y).$

The results obtained in \cite{NSJOM-besik}, concerning the invariance of Besicovitch-Doss almost periodicity under the actions of convolution products, are applicable in the case that the solution operator family $(R(t))_{t>0}\subseteq L(X,Y)$ satisfies the estimate 
\begin{align}\label{integr}
\int^{\infty}_{0}(1+t)\|R(t)\|_{L(X,Y)}\, dt<\infty.
\end{align}
This, in particular, holds if there exists a finite constant $M>0$ such that 
 the following holds:
\begin{align*}
\|R(t)\|_{L(X,Y)}\leq Me^{-ct}t^{\beta -1},\ t>0\mbox{ for some finite constants }c>0,\ \beta \in (0,1].
\end{align*}
Therefore, the results from \cite{NSJOM-besik} can be applied to a large class of abstract (degenerate) inhomogeneous differential equations of first order in Banach spaces.

On the other hand, in the theory of abstract (degenerate) fractional differential equations in Banach spaces, the estimate
\begin{align}\label{isti-ee}
\|R(t)\|_{L(X,Y)}\leq M\frac{t^{\beta -1}}{1+t^{\gamma}},\ t>0\mbox{ for some finite constants }\gamma>1,\ \beta \in (0,1],\ M>0
\end{align}
plays a crucial role. In the case that $\beta-\gamma \geq -1,$ which naturally appears in applications, the estimate  \eqref{integr} does not hold so that the results of \cite{NSJOM-besik} cannot be applied; see \cite{fedorov-novi}, \cite{knjigaho}-\cite{nova-mono} and references cited therein for more details about the subject.

The main purpose of this paper is to analyze the invariance of Doss $p$-almost periodicity and Besicovitch-Doss $p$-almost periodicity under the actions of infinite convolution product 
\begin{align}\label{profa}
G(t)\equiv  t\mapsto \int_{-\infty}^{t}R(t-s)g(s)\, ds,\ t\in {\mathbb R}
\end{align}
and finite convolution product 
\begin{align}\label{hakala}
H(t)\equiv \int^{t}_{0}R(t-s)[g(s)+q(s)]\, ds,\ t\geq 0
\end{align}
where $1\leq p<\infty,$ $(R(t))_{t>0}$ satisfies the estimate \eqref{isti-ee}, $g(\cdot)$ is Doss $p$-almost periodic or Besicovitch-Doss $p$-almost periodic, and $q(\cdot)$ is vanishing in time, in a certain sense. 

The organization of paper is briefly described as follows. After giving some necessary facts about fractional calculus and types of fractional derivatives used in the paper, we analyze Doss almost periodic functions and Besicovitch-Doss almost periodic functions in Subsection \ref{bice-bolje}. Our main contributions are given in Section \ref{for-i to} (Theorem \ref{brzo-kuso}, Theorem \ref{brzo-kuso1} and Proposition \ref{brzo-kusowas}, Proposition \ref{brzo-kusowas1}); in Subsection \ref{prcko-duo}, we present some applications of our abstract theoretical results established.

Fractional calculus and fractional differential equations are rapidly growing fields of research, due to their invaluable importance in modeling real world phenomena appearing in many fields of science and engineering, such
as astrophysics, electronics, diffusion, chemistry, biology,
aerodynamics and thermodynamics. For more details, see \cite{kilbas}-\cite{knjigaho} and references cited therein. In this paper, we use 
the Weyl-Liouville fractional derivatives and Caputo fractional derivatives. The Weyl-Liouville fractional derivative $D_{t,+}^{\gamma}u(t)$ of order $\gamma  \in (0,1)$ is defined for those continuous functions
$u : {\mathbb R} \rightarrow X$
such that $t\mapsto \int_{-\infty}^{t}g_{1-\gamma}(t-s)u(s)\, ds,$ $t\in {\mathbb R}$ is a well-defined continuously differentiable mapping, by
$$
D_{t,+}^{\gamma}u(t):=\frac{d}{dt}\int_{-\infty}^{t}g_{1-\gamma}(t-s)u(s)\, ds,\quad t\in {\mathbb R}.
$$
For further information about Weyl-Liouville fractional derivatives, we refer the reader to \cite{relaxation-peng}.
If $\alpha >0$ and $m=\lceil \alpha \rceil,$ then 
the Caputo fractional derivative\index{fractional derivatives!Caputo}
${\mathbf D}_{t}^{\alpha}u(t)$ is defined for those functions $u\in
C^{m-1}([0,\infty) : X)$ such that $g_{m-\alpha} \ast
(u-\sum_{k=0}^{m-1}u_{k}g_{k+1}) \in C^{m}([0,\infty) : X),$
by
$$
{\mathbf
D}_{t}^{\alpha}u(t)=\frac{d^{m}}{dt^{m}}\Biggl[g_{m-\alpha}
\ast \Biggl(u-\sum_{k=0}^{m-1}u_{k}g_{k+1}\Biggl)\Biggr].
$$
For further information about Caputo fractional derivatives, we refer the reader to \cite{knjigaho}. Here, $g_{\zeta}(t):=t^{\zeta -1}/\Gamma(\zeta),$ where $\Gamma(\cdot)$ denotes the Euler Gamma function ($\zeta>0$), and $\lceil \alpha \rceil:=\inf\{k\in {\mathbb Z} : k\geq \alpha\}.$

Set $(+\infty)^{a}:=+\infty$ for any finite number $a>0.$ We will use the following elementary lemma:

\begin{lem}\label{element}
Suppose that $1\leq p<\infty$ and $\varphi : {\mathbb R} \rightarrow [0,\infty)$ is a non-negative function. Then we have
$$
\limsup_{s\rightarrow +\infty}\Bigl[\varphi (s)^{1/p}\Bigr]=\Biggl[\limsup_{s\rightarrow +\infty}\varphi (s)\Biggr]^{1/p}.
$$
\end{lem}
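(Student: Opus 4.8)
The plan is to exploit the fact that, under the convention $(+\infty)^{1/p}:=+\infty$ adopted above, the map $f(x):=x^{1/p}$ is a continuous, strictly increasing bijection of $[0,\infty]$ onto itself, and that such maps commute both with suprema and with monotone limits. First I would recall the definition
$$
\limsup_{s\rightarrow +\infty}\varphi(s)=\lim_{t\rightarrow +\infty}\Psi(t),\qquad \Psi(t):=\sup_{s\geq t}\varphi(s)\in[0,\infty],
$$
observing that $\Psi$ is non-increasing, so this limit exists in $[0,\infty]$ and coincides with $\inf_{t>0}\Psi(t)$.

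The key step is the pointwise identity $\sup_{s\geq t}f(\varphi(s))=f\bigl(\Psi(t)\bigr)$ for each fixed $t$. The inequality ``$\leq$'' is immediate from the monotonicity of $f$, since $\varphi(s)\leq\Psi(t)$ forces $f(\varphi(s))\leq f(\Psi(t))$ for every $s\geq t$. For ``$\geq$'' one picks a sequence $s_{n}\geq t$ with $\varphi(s_{n})\rightarrow\Psi(t)$ (allowing $\Psi(t)=+\infty$, in which case $\varphi(s_{n})\rightarrow+\infty$); continuity of $f$ on $[0,\infty]$ then gives $f(\varphi(s_{n}))\rightarrow f(\Psi(t))$, whence $\sup_{s\geq t}f(\varphi(s))\geq f(\Psi(t))$.

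Combining the two steps, and using once more the continuity of $f$ together with the existence of $\lim_{t\rightarrow+\infty}\Psi(t)$ guaranteed by the first paragraph,
$$
\limsup_{s\rightarrow +\infty}f(\varphi(s))=\lim_{t\rightarrow +\infty}\sup_{s\geq t}f(\varphi(s))=\lim_{t\rightarrow +\infty}f\bigl(\Psi(t)\bigr)=f\Bigl(\lim_{t\rightarrow +\infty}\Psi(t)\Bigr)=f\Bigl(\limsup_{s\rightarrow +\infty}\varphi(s)\Bigr),
$$
which is precisely the asserted equality for $f(x)=x^{1/p}$. The only point that demands any care — and hence the mild ``main obstacle'' — is the bookkeeping around the value $+\infty$: one must verify that the extraction of a maximizing sequence and the passage to the limit remain legitimate when $\Psi(t)$ or $\limsup_{s\to+\infty}\varphi(s)$ equals $+\infty$, which is exactly where the stated convention $(+\infty)^{a}:=+\infty$ and the continuity of $x\mapsto x^{1/p}$ at $+\infty$ enter. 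Everything else is entirely routine.
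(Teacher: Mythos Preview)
Your proof is correct and follows essentially the same route as the paper's: write $\limsup$ as $\lim_{t\to\infty}\sup_{s\geq t}$, commute the monotone bijection $x\mapsto x^{1/p}$ first with the inner supremum and then with the outer (monotone) limit, invoking the convention $(+\infty)^{1/p}=+\infty$. The paper records exactly this chain of equalities in a single line without spelling out the two inequalities for the supremum step or the monotonicity of $\Psi$; your version simply supplies those details.
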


\begin{proof}
Clearly, with the common consent introduced above, we have
\begin{align*}
\limsup_{s\rightarrow +\infty}&\Bigl[\varphi (s)^{1/p}\Bigr]=\lim_{s\rightarrow +\infty}\sup_{y\geq s}\Bigl[\varphi(y)^{1/p}\Bigr]
\\ =& \lim_{s\rightarrow +\infty}\Biggl[\sup_{y\geq s}\varphi(y)\Biggr]^{1/p}=\Biggl[ \lim_{s\rightarrow +\infty}\sup_{y\geq s}\varphi (y) \Biggr]^{1/p}=\Biggl[\limsup_{s\rightarrow +\infty}\varphi (s)\Biggr]^{1/p}.
\end{align*}
\end{proof}

\subsection{Doss almost periodic functions and Besicovitch-Doss almost periodic functions}\label{bice-bolje}

Let $1\leq p<\infty ,$ and let $I={\mathbb R}$ or $I=[0,\infty).$ Let us recall that the set $D\subseteq I$ is relatively dense iff for each $\epsilon>0$ 
there exists $l>0$ such that any subinterval of $I$ of length $l$ contains at least one element of set $D$. Following A. S. Besicovitch \cite{besik}, for every function $f\in L_{loc}^{p}({\mathbb R} : X),$ we set
$$
\|f\|_{{\mathcal M}^{p}}:=\limsup_{t\rightarrow +\infty}\Biggl[ \frac{1}{2t}\int^{t}_{-t}\| f(s)\|^{p}\, ds\Biggr]^{1/p};
$$
if $f\in L_{loc}^{p}([0,\infty) : X),$ then we set
$$
\|f\|_{{\mathcal M}^{p}}:=\limsup_{t\rightarrow +\infty}\Biggl[ \frac{1}{t}\int^{t}_{0}\| f(s)\|^{p}\, ds\Biggr]^{1/p}.
$$

It is well known that $
\|\cdot \|_{{\mathcal M}^{p}}$ is a seminorm on the space  ${\mathcal M}^{p} (I : X)$ consisting of those $L_{loc}^{p}(I : X)$-functions $f(\cdot)$ for which $
\|f\|_{{\mathcal M}^{p}}<\infty .$ Put $K_{p}(I : X) := \{ f \in {\mathcal M}^{p}(I : X) : 
\|f\|_{{\mathcal M}^{p}}= 0\} $
and
$
M_{p}(I : X) :={\mathcal M}^{p}(I : X) / K_{p}(I : X).
$
The seminorm $
\|\cdot \|_{{\mathcal M}^{p}}$ on ${\mathcal M}^{p}(I : X)$ induces the norm $
\|\cdot \|_{M^{p}}$ on $M^{p}(I : X),$ under which $M^{p}(I : X)$ becomes a Banach space.
A function $f\in L_{loc}^{p}(I : X)$ is said to be Besicovitch $p$-almost periodic iff there exists a sequence of 
$X$-valued trigonometric polynomials converging to $f(\cdot)$ in $(M^{p}(I : X),\|\cdot \|_{M^{p}}).$ By $B^{p}(I : X)$ we denote 
the
vector space consisting of all Besicovitch  $p$-almost periodic functions $I \rightarrow X.$ 

We will use the following lemma (see \cite[Proposition 2.4]{NSJOM-besik} for the case that $I=[0,\infty)$ and the analysis of R. Doss \cite[p. 478]{doss} for the case that $I={\mathbb R}$): 

\begin{lem}\label{ravi-doss}
Let $1\leq p<\infty ,$ and let $q\in L_{loc}^{p}(I : X).$ Then $\|q(t+\cdot)\|_{{\mathcal M}^{p}}=\|q\|_{{\mathcal M}^{p}}$ for all $t\in I.$
\end{lem}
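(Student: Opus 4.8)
The plan is to pass to $p$-th powers and then argue by monotonicity of the integral together with a squeeze. It suffices to prove that
\[
\limsup_{T\to+\infty}\frac{1}{2T}\int_{-T}^{T}\|q(t+s)\|^{p}\,ds=\limsup_{T\to+\infty}\frac{1}{2T}\int_{-T}^{T}\|q(s)\|^{p}\,ds\qquad(I={\mathbb R}),
\]
together with the corresponding one-sided identity when $I=[0,\infty)$; once this is available, Lemma~\ref{element} (applied with $\varphi(T):=\frac{1}{2T}\int_{-T}^{T}\|q(t+s)\|^{p}\,ds$, resp.\ its half-line analogue) lets one interchange the exponent $1/p$ with the $\limsup$, and the definition of $\|\cdot\|_{{\mathcal M}^{p}}$ then gives $\|q(t+\cdot)\|_{{\mathcal M}^{p}}=\|q\|_{{\mathcal M}^{p}}$. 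I will work with the convention $(+\infty)^{a}=+\infty$, so that the argument simultaneously covers the case $\|q\|_{{\mathcal M}^{p}}<\infty$ and the case $\|q\|_{{\mathcal M}^{p}}=+\infty$, and I will repeatedly use two elementary facts about upper limits: $\limsup_{T}(a_{T}b_{T})=\limsup_{T}b_{T}$ whenever $a_{T}\to 1$ and $b_{T}\geq 0$, and $\limsup_{T}(a_{T}+b_{T})=\limsup_{T}a_{T}$ whenever $b_{T}\to 0$.

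Consider first $I={\mathbb R}$ and fix $t\in{\mathbb R}$. After the change of variables $u=t+s$ the quantity in question becomes $\frac{1}{2T}\int_{t-T}^{t+T}\|q(u)\|^{p}\,du$, and for every $T>|t|$ one has the nested inclusions
\[
\bigl[-(T-|t|),\,T-|t|\bigr]\subseteq[t-T,\,t+T]\subseteq\bigl[-(T+|t|),\,T+|t|\bigr],
\]
which follow by inspecting the four endpoint inequalities. Since $\|q(\cdot)\|^{p}\geq 0$, integrating over these intervals and dividing by $2T$ yields
\[
\frac{T-|t|}{T}\cdot\frac{1}{2(T-|t|)}\int_{-(T-|t|)}^{T-|t|}\|q(u)\|^{p}\,du\leq\frac{1}{2T}\int_{t-T}^{t+T}\|q(u)\|^{p}\,du\leq\frac{T+|t|}{T}\cdot\frac{1}{2(T+|t|)}\int_{-(T+|t|)}^{T+|t|}\|q(u)\|^{p}\,du.
\]
Each outer expression has the form $\frac{T\mp|t|}{T}\,\psi(T\mp|t|)$ with $\psi(R):=\frac{1}{2R}\int_{-R}^{R}\|q(u)\|^{p}\,du$; since $\frac{T\mp|t|}{T}\to 1$ and $\{T\mp|t|:T>|t|\}$ is cofinal in $(0,\infty)$, both outer upper limits equal $\limsup_{R\to+\infty}\psi(R)=\|q\|_{{\mathcal M}^{p}}^{p}$. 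The squeeze then forces $\limsup_{T\to+\infty}\frac{1}{2T}\int_{t-T}^{t+T}\|q(u)\|^{p}\,du=\|q\|_{{\mathcal M}^{p}}^{p}$, as required.

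For $I=[0,\infty)$ and $t\geq 0$ the same strategy works with one-sided averages: writing $\frac{1}{T}\int_{0}^{T}\|q(t+s)\|^{p}\,ds=\frac{1}{T}\int_{t}^{t+T}\|q(u)\|^{p}\,du$ and using $\int_{t}^{t+T}=\int_{0}^{t+T}-\int_{0}^{t}$ along with $\int_{0}^{t+T}\geq\int_{0}^{T}$ (as $t\geq 0$) gives
\[
\frac{T+t}{T}\cdot\frac{1}{T+t}\int_{0}^{T+t}\|q(u)\|^{p}\,du-\frac{1}{T}\int_{0}^{t}\|q(u)\|^{p}\,du\leq\frac{1}{T}\int_{t}^{t+T}\|q(u)\|^{p}\,du\leq\frac{T+t}{T}\cdot\frac{1}{T+t}\int_{0}^{T+t}\|q(u)\|^{p}\,du,
\]
and since $\frac{1}{T}\int_{0}^{t}\|q(u)\|^{p}\,du\to 0$ (the integral is finite because $q\in L_{loc}^{p}$ and $t$ is fixed) while $\frac{T+t}{T}\to 1$, the two outer expressions once more have $\limsup$ equal to $\|q\|_{{\mathcal M}^{p}}^{p}$, and the squeeze concludes. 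I do not anticipate a genuine obstacle; the only point that deserves care is that for a mere $L_{loc}^{p}$ function the ``tail pieces'' $\int_{T}^{T+t}\|q\|^{p}$ (resp.\ $\int_{T-|t|}^{T+|t|}\|q\|^{p}$ in the two-sided case) need not be $o(T)$ pointwise in $T$, which is exactly why one must bound by nested intervals and squeeze rather than try to evaluate the difference between the shifted and unshifted averages directly.
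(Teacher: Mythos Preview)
Your argument is correct. The squeeze via the nested symmetric intervals in the case $I=\mathbb{R}$ is exactly the right device, and your remark at the end correctly identifies why one cannot simply subtract the shifted and unshifted averages: for a function that is merely $L_{loc}^{p}$, the boundary pieces $\int_{T}^{T+|t|}\|q\|^{p}$ need not be $o(T)$. In the half-line case your displayed lower bound is in fact an \emph{equality} (it is just the decomposition $\int_{t}^{t+T}=\int_{0}^{t+T}-\int_{0}^{t}$), so the auxiliary inequality $\int_{0}^{t+T}\geq\int_{0}^{T}$ you mention is not actually needed; the conclusion follows already from that equality together with $\frac{1}{T}\int_{0}^{t}\|q\|^{p}\to 0$ and $\frac{T+t}{T}\to 1$.

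As for comparison with the paper: the paper does not prove this lemma at all. It is stated with a reference to \cite[Proposition~2.4]{NSJOM-besik} for $I=[0,\infty)$ and to the discussion on p.~478 of Doss~\cite{doss} for $I=\mathbb{R}$. Your write-up therefore supplies a self-contained elementary proof where the paper defers to the literature; the invocation of Lemma~\ref{element} to pass between the $p$-th power and the seminorm is in keeping with how the paper handles such interchanges elsewhere.
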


A function $q\in L_{loc}^{p}(I : X)$ is said to be Besicovitch $p$-vanishing iff $\|q\|_{{\mathcal M}^{p}}=0$ (see \cite[Definition 2.3, Proposition 2.4, Corollary 2.5]{NSJOM-besik} for the case that $I=[0,\infty)$). 

Following the fundamental researches of R. Doss \cite{doss}-\cite{doss5}, established for scalar-valued functions, we have recently introduced the following notion in \cite{NSJOM-besik}: 

\begin{defn}\label{doss6}
Let $1\leq p<\infty ,$ and let $f\in L_{loc}^{p}(I : X).$
Then it is said that $f(\cdot)$ is:
\begin{itemize}
\item[(i)] $B^{p}$-bounded iff $
\|f\|_{{\mathcal M}^{p}}<\infty.$
\item[(ii)] $B^{p}$-continuous iff
\begin{align*}
\lim_{\tau \rightarrow 0}\limsup_{t\rightarrow +\infty}\Biggl[\frac{1}{2t}\int^{t}_{-t}\| f(s+\tau)-f(s)\|^{p}\, ds\Biggr]^{1/p}=0,
\end{align*}
in the case that $I={\mathbb R},$  resp.,
\begin{align*}
\lim_{\tau \rightarrow 0+}\limsup_{t\rightarrow +\infty}\Biggl[\frac{1}{t}\int^{t}_{0}\| f(s+\tau)-f(s)\|^{p}\, ds\Biggr]^{1/p}=0,
\end{align*}
in the case that $I=[0,\infty).$
\item[(iii)] Doss $p$-almost periodic iff, for every $\epsilon >0$, the set of numbers $\tau \in I$ for which
\begin{align}\label{doss-ap}
\limsup_{t\rightarrow +\infty}\Biggl[\frac{1}{2t}\int^{t}_{-t}\| f(s+\tau)-f(s)\|^{p}\, ds\Biggr]^{1/p}<\epsilon,
\end{align}
in the case that $I={\mathbb R},$  resp.,
\begin{align*}
\limsup_{t\rightarrow +\infty}\Biggl[\frac{1}{t}\int^{t}_{0}\| f(s+\tau)-f(s)\|^{p}\, ds\Biggr]^{1/p}<\epsilon,
\end{align*}
in the case that $I=[0,\infty),$ is relatively dense in $I.$
\item[(iv)] Besicovitch-Doss $p$-almost periodic iff (i)-(iii) hold as well as, for every $\lambda \in {\mathbb R}$, we have that
\begin{align}\label{idiotishen}
\lim_{l \rightarrow +\infty}\limsup_{t\rightarrow +\infty}\frac{1}{l}\Biggl[\frac{1}{2t}\int^{t}_{-t}\Biggl\| \Biggl( \int^{x+l}_{x}-\int^{l}_{0}\Biggr) e^{i\lambda s}f(s)\, ds \Biggr \|^{p}\, dx\Biggr]^{1/p}=0,
\end{align}
in the case that $I={\mathbb R},$  resp.,
\begin{align*}
\lim_{l \rightarrow +\infty}\limsup_{t\rightarrow +\infty}\frac{1}{l}\Biggl[\frac{1}{t}\int^{t}_{0}\Biggl\| \Biggl( \int^{x+l}_{x}-\int^{l}_{0}\Biggl) e^{i\lambda s}f(s)\, ds\Biggr \|^{p}\, dx\Biggr]^{1/p}=0,
\end{align*}
in the case that $I=[0,\infty).$
\end{itemize}
\end{defn}

By ${\mathrm B}^{p}(I : X)$
we denote the
class consisting of all Besicovitch-Doss $p$-almost periodic functions $I \rightarrow X.$ Before proceeding further, we would like to note that,  in scalar-valued case $X={\mathbb C}$, R. Doss has proved that ${\mathrm B}^{p}(I : X)=B^{p}(I : X).$ It is still unknown whether this equality holds in vector-valued case (see \cite{NSJOM-besik} for the problem raised). By ${\mathrm D}^{p}(I : X)$
we denote the
class consisting of all Doss $p$-almost periodic functions $I \rightarrow X.$ 

For more details about these classes of generalized almost periodic functions, we refer the reader to the monograph \cite{besik} by A. S. Besicovitch, the survey article \cite{deda} by J. Andres, A. M. Bersani, R. F. Grande, and the forthcoming monograph \cite{nova-mono} by M. Kosti\' c. 

We also need the notion of Stepanov $p$-boundedness. A function $f\in L^{p}_{loc}(I :X)$ is said to be Stepanov $p$-bounded iff
$$
\|f\|_{S^{p}}:=\sup_{t\in I}\Biggl( \int^{t+1}_{t}\|f(s)\|^{p}\, ds\Biggr)^{1/p}<\infty.
$$

\section{Doss almost periodic properties and Besicovitch-Doss almost periodic properties of convolution products}\label{for-i to}

We start this section by stating the following result:

\begin{thm}\label{brzo-kuso}
Let $1/p+1/q=1$ and let $(R(t))_{t>0}\subseteq L(X,Y)$ satisfy \eqref{isti-ee}. Let a function $g : {\mathbb R} \rightarrow X$ be Doss $p$-almost periodic and Stepanov $p$-bounded, and let $q(\beta-1)>-1$ provided that $p>1$, resp. $\beta=1,$ provided that $p=1$. Then the function
$G : {\mathbb R} \rightarrow Y,$ defined through \eqref{profa}, is bounded continuous and Doss $p$-almost periodic. Furthermore, if $g(\cdot)$ is $B^{p}$-continuous, then $G(\cdot)$ is $B^{p}$-continuous, as well.
\end{thm}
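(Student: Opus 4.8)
The plan is to work entirely on the real line and exploit the decay estimate \eqref{isti-ee} together with a splitting of the convolution kernel into a piece near $0$ (where the singularity $t^{\beta-1}$ lives) and a tail piece (where the $t^{\beta-\gamma}$ decay lives). First I would establish that $G(\cdot)$ is well defined, bounded, and continuous. For boundedness, I would split $\int_{-\infty}^{t}R(t-s)g(s)\,ds = \int_{0}^{1} R(u)g(t-u)\,du + \int_{1}^{\infty} R(u)g(t-u)\,du$. On the first integral, for $p>1$, apply H\"older's inequality with exponents $p,q$: the factor $\|R(u)\|_{L(X,Y)} \le M u^{\beta-1}$ is in $L^q([0,1])$ precisely because $q(\beta-1)>-1$, while $\bigl(\int_0^1 \|g(t-u)\|^p\,du\bigr)^{1/p} \le \|g\|_{S^p}$ uniformly in $t$; for $p=1$ one uses $\beta=1$ so $\|R(u)\|$ is bounded on $[0,1]$. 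On the tail integral, break $[1,\infty)$ into unit intervals $[k,k+1]$, apply H\"older on each, use $\|R(u)\|_{L(X,Y)}\le M u^{\beta-\gamma}/(1+o(1)) \le M' k^{\beta-\gamma}$ on $[k,k+1]$ (and $\beta-\gamma<0$), and sum the resulting geometric-type series $\sum_k k^{\beta-\gamma}\|g\|_{S^p}$, which converges since $\gamma>1\ge\beta$ forces $\beta-\gamma<-1+(\beta-1)\le$ — more carefully, since $q(\beta-1)>-1$ and $\gamma>1$ we get $\beta-\gamma < \beta-1 \le 1/q \cdot$ hmm; the clean statement is that $u\mapsto \|R(u)\|_{L(X,Y)}\mathbf 1_{[1,\infty)}$ lies in $L^q([1,\infty))$ because $q(\beta-\gamma)<q(\beta-1)-q< -1-q+\text{(something)}$, so I would simply verify $q(\beta-\gamma)<-1$ directly from $\gamma>1$ and $q\ge 1$ together with $q(\beta-1)>-1$. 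Continuity of $G$ then follows from the dominated convergence theorem applied to the same majorants, since translation is continuous in $L^p_{loc}$.

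Next, the heart of the matter: Doss $p$-almost periodicity of $G$. Fix $\epsilon>0$. By hypothesis the set of $\tau\in\mathbb R$ with $\limsup_{t\to\infty}\bigl[\frac{1}{2t}\int_{-t}^{t}\|g(s+\tau)-g(s)\|^p\,ds\bigr]^{1/p}<\epsilon'$ is relatively dense, for any $\epsilon'>0$ to be chosen. For such a $\tau$, write
\[
G(x+\tau)-G(x) = \int_{0}^{\infty} R(u)\bigl[g(x+\tau-u)-g(x-u)\bigr]\,du,
\]
and I want to bound $\limsup_{t\to\infty}\bigl[\frac{1}{2t}\int_{-t}^{t}\|G(x+\tau)-G(x)\|^p\,dx\bigr]^{1/p}$. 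The strategy is: apply Minkowski's integral inequality (the continuous/integral form of Minkowski) to pull the $u$-integral outside the $L^p_x$-norm, getting
\[
\Bigl[\tfrac{1}{2t}\int_{-t}^{t}\|G(x+\tau)-G(x)\|^p\,dx\Bigr]^{1/p}
\le \int_0^\infty \|R(u)\|_{L(X,Y)}\Bigl[\tfrac{1}{2t}\int_{-t}^{t}\|g(x+\tau-u)-g(x-u)\|^p\,dx\Bigr]^{1/p}\,du.
\]
Now I would use Lemma \ref{ravi-doss}: the inner bracket, as $t\to\infty$, has $\limsup$ equal to $\|g(\cdot+\tau-u)-g(\cdot-u)\|_{\mathcal M^p} = \|\,(g(\cdot+\tau)-g(\cdot))(\cdot-u)\,\|_{\mathcal M^p} = \|g(\cdot+\tau)-g(\cdot)\|_{\mathcal M^p}$, which is independent of $u$ and is $<\epsilon'$ by choice of $\tau$. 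The remaining subtlety is interchanging $\limsup_{t\to\infty}$ with $\int_0^\infty\cdots du$; I would justify this by dominated convergence for the $\limsup$, using that the integrand is dominated by $\|R(u)\|_{L(X,Y)}\cdot 2^{1/p}\|g\|_{S^p}$ near $0$ after a Stepanov-type estimate and by the integrable tail majorant from the first paragraph — more precisely, applying Fatou/reverse-Fatou to bound $\limsup_t \int \le \int \limsup_t$ when a uniform integrable majorant exists. The conclusion is $\limsup_{t\to\infty}[\cdots]^{1/p} \le \epsilon' \int_0^\infty \|R(u)\|_{L(X,Y)}\,du = \epsilon' \|R\|_{L^1} =: C\epsilon'$, so choosing $\epsilon' = \epsilon/C$ gives exactly the Doss $p$-almost periodicity estimate \eqref{doss-ap} for $G$, on a relatively dense set of $\tau$.

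Finally, $B^p$-continuity of $G$ when $g$ is $B^p$-continuous is proved by the identical Minkowski-plus-Lemma-\ref{ravi-doss} device, now applied to $G(x+\tau)-G(x)$ with $\tau\to 0$: one gets
\[
\limsup_{t\to\infty}\Bigl[\tfrac{1}{2t}\int_{-t}^{t}\|G(x+\tau)-G(x)\|^p\,dx\Bigr]^{1/p}
\le \|R\|_{L^1}\cdot \limsup_{t\to\infty}\Bigl[\tfrac{1}{2t}\int_{-t}^{t}\|g(x+\tau)-g(x)\|^p\,dx\Bigr]^{1/p},
\]
and letting $\tau\to 0$ and invoking $B^p$-continuity of $g$ kills the right-hand side. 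I expect the main obstacle to be the rigorous justification of the interchange of $\limsup_{t\to+\infty}$ with the $u$-integral over $(0,\infty)$ — this is where the two-sided control of $R$ (integrable singularity at $0$ via $q(\beta-1)>-1$, integrable decay at $\infty$ via $\gamma>1$) is genuinely needed, and where Lemma \ref{element} is used to move the power $1/p$ through the $\limsup$. Everything else is H\"older, Minkowski, and bookkeeping.
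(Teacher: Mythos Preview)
Your argument is correct and reaches the same conclusion by a somewhat different route. The paper bounds $\|G(s+\tau)-G(s)\|$ \emph{pointwise} in $s$ via H\"older's inequality with an auxiliary weight $(1+|v|^{\zeta})^{-1}$, choosing $\zeta\in(\tfrac1p,\tfrac1p+\gamma-\beta)$ so that both $(1+|v|^{\zeta})^{-1}\in L^p(-\infty,0]$ and $v\mapsto\|R(-v)\|(1+|v|^{\zeta})\in L^q(-\infty,0]$; only afterwards does it raise to the $p$-th power, integrate in $s$, and apply Fubini, reverse Fatou, and Lemma~\ref{ravi-doss}. You instead apply Minkowski's integral inequality at the outset to pull the $u$-integral outside the $L^p_x$-norm, which requires only $\|R(\cdot)\|\in L^1(0,\infty)$ (true since $\beta>0$ handles the singularity at $0$ and $\beta-1-\gamma<-1$ handles the tail). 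Your route is more direct and avoids the parameter $\zeta$ altogether; the paper's route has the side benefit of producing a pointwise estimate that is recycled verbatim in the proof of Theorem~\ref{brzo-kuso1}. Both proofs share the same closing moves: the reverse Fatou interchange (your Stepanov-based uniform bound $F_t(u)\le C\|g\|_{S^p}$, valid for all $u$ and $t\ge 1$, supplies the required integrable majorant $C\|R(u)\|$), Lemma~\ref{ravi-doss}, and Lemma~\ref{element}. One minor slip in your boundedness sketch: on $[k,k+1]$ the kernel is controlled by $u^{\beta-1-\gamma}$, not $u^{\beta-\gamma}$, and it is the former exponent (always $<-1$ here) that makes the sum over unit blocks converge; the paper sidesteps this computation by citing \cite{fedorov-novi}.
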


\begin{proof}
We primarily analyze the case that $g(\cdot)$ is Doss $p$-almost periodic with $p>1$ and explain certain differences in the proof provided that $p=1;$ the assumption $X=Y$ can be made. Since $g(\cdot)$ is Stepanov $p$-bounded and $q(\beta-1)>-1$, a similar line of reasoning as in the proof of \cite[Theorem 2.1]{fedorov-novi} shows that $G(\cdot)$ is bounded and continuous on the real line.

Now we will prove that $G(\cdot)$ is Doss $p$-almost periodic. 
Let
a number $\epsilon>0$ be fixed. By definition, we can find a real number $l>0$ such that any interval $I\subseteq {\mathbb R}$ of length $l$ contains a point $\tau \in  I$ such that 
\eqref{doss-ap} holds with $f=g$ therein.
Further on, there exists of a positive real number $\zeta >0$ 
satisfying
$
\frac{1}{p}<\zeta <\frac{1}{p}+\gamma-\beta
$
(in the case that $p=1,$ take any number $\zeta \in (1,\gamma)$ and repeat the procedure).
As in the proof of \cite[Theorem 2.1]{fedorov-novi}, we may conclude that
the integral
$$
\int^{0}_{-\infty}\|g(s+t+\tau)-g(s+t)\|^{p} / (1+|s|^{\zeta})^{p} \, ds
$$
converges for any $t\in {\mathbb R}$ as well as that there exists an absolute constant $C>0$ such that
\begin{align*}
\|G(s+\tau)-G(s)\|
\leq C\Biggl[ \int^{0}_{-\infty}\|g(v+s+\tau)-g(v+s)\|^{p} / (1+|v|^{\zeta})^{p} \, dv\Biggr]^{1/p},\ s\in {\mathbb R}.
\end{align*}
Using this estimate, the Fubini theorem and Lemma \ref{element}, we get that
\begin{align*}  
&\limsup_{t\rightarrow +\infty}\Biggl[\frac{1}{2t}\int^{t}_{-t}\| G(s+\tau)-G(s)\|^{p}\, ds\Biggr]^{1/p}
\\ \leq & C\limsup_{t\rightarrow +\infty}
\Biggl[\frac{1}{2t}\int^{t}_{-t} \int^{0}_{-\infty}\|g(v+s+\tau)-g(v+s)\|^{p} / (1+|v|^{\zeta})^{p} \, dv\, ds\Biggr]^{1/p}
\\ = & C\limsup_{t\rightarrow +\infty}
\Biggl[\int^{0}_{-\infty}\frac{1}{2t}\int^{t}_{-t}\|g(v+s+\tau)-g(v+s)\|^{p} / (1+|v|^{\zeta})^{p} \, ds\, dv\Biggr]^{1/p}
\\ = & C\Biggl[\limsup_{t\rightarrow +\infty}
\int^{0}_{-\infty}\frac{1}{2t}\int^{t}_{-t}\|g(v+s+\tau)-g(v+s)\|^{p} / (1+|v|^{\zeta})^{p} \, ds\, dv\Biggr]^{1/p}.
\end{align*}
By the reverse Fatou lemma, the above implies
\begin{align*}  
&\limsup_{t\rightarrow +\infty}\Biggl[\frac{1}{2t}\int^{t}_{-t}\| G(s+\tau)-G(s)\|^{p}\, ds\Biggr]^{1/p}
\\ \leq & C\Biggl[
\int^{0}_{-\infty}\frac{1}{(1+|v|^{\zeta})^{p}}\limsup_{t\rightarrow +\infty}\Biggl\{\frac{1}{2t}\int^{t}_{-t}\|g(v+s+\tau)-g(v+s)\|^{p}\, ds\Biggr\} \, dv\Biggr]^{1/p}.
\end{align*}
Keeping in mind Lemma \ref{ravi-doss}, we get that, for every $v\leq 0,$ 
$$
\limsup_{t\rightarrow +\infty}\frac{1}{2t}\int^{t}_{-t}\|g(v+s+\tau)-g(v+s)\|^{p}\, ds=\limsup_{t\rightarrow +\infty}\frac{1}{2t}\int^{t}_{-t}\|g(s+\tau)-g(s)\|^{p}\, ds,
$$
so that
\begin{align*}  
&\limsup_{t\rightarrow +\infty}\Biggl[\frac{1}{2t}\int^{t}_{-t}\| G(s+\tau)-G(s)\|^{p}\, ds\Biggr]^{1/p}
\\ \leq & C\Biggl[
\int^{0}_{-\infty}\frac{1}{(1+|v|^{\zeta})^{p}}\limsup_{t\rightarrow +\infty}\Biggl\{\frac{1}{2t}\int^{t}_{-t}\|g(s+\tau)-g(s)\|^{p}\, ds\Biggr\} \, dv\Biggr]^{1/p}
\\ = & C\Biggl[\limsup_{t\rightarrow +\infty}\frac{1}{2t}\int^{t}_{-t}\|g(s+\tau)-g(s)\|^{p}\, ds\Biggr]^{1/p}\Biggl[
\int^{0}_{-\infty}\frac{dv}{(1+|v|^{\zeta})^{p}}\Biggr]^{1/p}.
\end{align*}
Applying again Lemma \ref{element}, we finally get
\begin{align*}  
&\limsup_{t\rightarrow +\infty}\Biggl[\frac{1}{2t}\int^{t}_{-t}\| G(s+\tau)-G(s)\|^{p}\, ds\Biggr]^{1/p}
\\ \leq & C\limsup_{t\rightarrow +\infty}\Biggl[\frac{1}{2t}\int^{t}_{-t}\|g(s+\tau)-g(s)\|^{p}\, ds\Biggr]^{1/p}\Biggl[
\int^{0}_{-\infty}\frac{dv}{(1+|v|^{\zeta})^{p}}\Biggr]^{1/p}
\\ \leq & C\epsilon \Biggl[
\int^{0}_{-\infty}\frac{dv}{(1+|v|^{\zeta})^{p}}\Biggr]^{1/p}.
\end{align*}
This shows that $G(\cdot)$ is Doss $p$-almost periodic. Since the above computations holds for each $\tau \in {\mathbb R},$ it is clear that the $B^{p}$-continuity of function $g(\cdot)$ implies that of $G(\cdot),$ as well. The proof of the theorem is thereby complete.
\end{proof}

Concerning the finite convolution product, we can state the following result which immediately follows from Theorem \ref{brzo-kuso} and 
the proof of \cite[Proposition 2.3]{fedorov-novi}:

\begin{prop}\label{brzo-kusowas}
Let $q\in L^{p}_{loc}([0,\infty) : X),$ $1/p+1/q=1$ and let $(R(t))_{t>0}\subseteq L(X,Y)$ satisfy \eqref{isti-ee}. Let a function $g : {\mathbb R} \rightarrow X$ be Doss $p$-almost periodic and Stepanov $p$-bounded, and let $q(\beta-1)>-1$ provided that $p>1$, resp. $\beta=1,$ provided that $p=1$. 
Suppose that the function
\begin{align}\label{preb}
t\mapsto  Q(t)\equiv \int^{t}_{0}R(t-s)q(s)\, ds,\ t\geq 0
\end{align}
belongs to some space ${\mathcal F}_{Y}$ of functions $[0,\infty)\rightarrow Y,$ satisfying that
\begin{align}\label{mravojed}
{\mathcal F}_{Y}+C_{0}([0,\infty) : Y)={\mathcal F}_{Y}.
\end{align}
Then the function $H(\cdot),$ defined through \eqref{hakala},
is continuous and 
belongs to the class ${\mathrm D}^{[0,\infty),p}(Y)+{\mathcal F}_{Y}$, where ${\mathrm D}^{[0,\infty),p}(Y)$ stands for the space of all restictions of $Y$-valued Doss $p$-almost periodic functions from the real line to the interval $[0,\infty).$
\end{prop}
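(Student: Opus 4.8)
The plan is to reduce the assertion to Theorem~\ref{brzo-kuso} by splitting off a term that vanishes at infinity, and then to invoke the absorption property \eqref{mravojed}. For $t\geq 0$ write
$$
H(t)=\int_{0}^{t}R(t-s)g(s)\,ds+\int_{0}^{t}R(t-s)q(s)\,ds=\int_{0}^{t}R(t-s)g(s)\,ds+Q(t).
$$
Since, for $t\geq 0$, the function $G(\cdot)$ from \eqref{profa} satisfies $G(t)=\int_{-\infty}^{0}R(t-s)g(s)\,ds+\int_{0}^{t}R(t-s)g(s)\,ds$, putting
$$
\Psi(t):=\int_{-\infty}^{0}R(t-s)g(s)\,ds=G(t)-\int_{0}^{t}R(t-s)g(s)\,ds,\qquad t\geq 0,
$$
we get $H(t)=G(t)|_{[0,\infty)}+[\,Q(t)-\Psi(t)\,]$. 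By Theorem~\ref{brzo-kuso}, $G(\cdot)$ is bounded continuous and Doss $p$-almost periodic, whence $G|_{[0,\infty)}\in{\mathrm D}^{[0,\infty),p}(Y)$; also $\Psi(\cdot)$ is continuous on $[0,\infty)$, being the difference of $G|_{[0,\infty)}$ and the continuous function $t\mapsto\int_{0}^{t}R(t-s)g(s)\,ds$ (continuity of this finite convolution product, and of $H(\cdot)$ itself, follows as in the proof of \cite[Proposition 2.3]{fedorov-novi}, using only $g\in S^{p}$, $q\in L_{loc}^{p}$, $q(\beta-1)>-1$ and \eqref{isti-ee}). Consequently, once we show that $\Psi(\cdot)\in C_{0}([0,\infty):Y)$, property \eqref{mravojed} yields $Q-\Psi\in{\mathcal F}_{Y}+C_{0}([0,\infty):Y)={\mathcal F}_{Y}$, and hence $H\in{\mathrm D}^{[0,\infty),p}(Y)+{\mathcal F}_{Y}$.

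It therefore remains to prove that $\lim_{t\to+\infty}\|\Psi(t)\|_{Y}=0$. Substituting $u=t-s$ gives $\Psi(t)=\int_{t}^{\infty}R(u)g(t-u)\,du$ for $t>0$, so that only values $R(u)$ with $u\geq t$ enter and the singularity of the kernel at the origin plays no role. Decomposing this integral over the unit intervals $[t+n,t+n+1]$ with $n=0,1,2,\dots$, estimating $\|R(u)\|_{L(X,Y)}\leq M(t+n)^{\beta-1}/(1+(t+n)^{\gamma})\leq M(t+n)^{\beta-1-\gamma}$ on the $n$-th block by \eqref{isti-ee} and the monotonicity of $u\mapsto u^{\beta-1}$, applying Hölder's inequality with exponents $p,q$ on each block, and then using the Stepanov $p$-boundedness of $g(\cdot)$ in the form $\int_{-n-1}^{-n}\|g(s)\|^{p}\,ds\leq\|g\|_{S^{p}}^{p}$, we obtain for $t>1$
$$
\|\Psi(t)\|_{Y}\leq M\|g\|_{S^{p}}\sum_{n=0}^{\infty}(t+n)^{\beta-1-\gamma}\leq\frac{M\|g\|_{S^{p}}}{\gamma-\beta}\,(t-1)^{\beta-\gamma},
$$
and the last expression tends to $0$ as $t\to+\infty$; the series converges since $\beta\leq 1<\gamma$ forces $\beta-1-\gamma<-\gamma<-1$ (for $p=1$, where $\beta=1$, the Hölder step is trivial and the same computation applies).

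The crux of the matter is the decay statement $\Psi(\cdot)\in C_{0}([0,\infty):Y)$: one must extract genuine vanishing at infinity from the convolution of the kernel $R(\cdot)$ with the factor $g(\cdot)$, which is only Stepanov $p$-bounded and in general does not decay. This is precisely the point at which the growth restriction $\gamma>1$ in \eqref{isti-ee} is used decisively — through the summability of $(t+n)^{\beta-1-\gamma}$ — and it is what fails under the weaker hypothesis \eqref{integr} of \cite{NSJOM-besik}. Everything else is the bookkeeping carried out above, combining Theorem~\ref{brzo-kuso} with the closedness property \eqref{mravojed} of ${\mathcal F}_{Y}$.
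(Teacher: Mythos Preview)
Your argument is correct and is precisely the approach the paper intends: the paper states that the proposition ``immediately follows from Theorem~\ref{brzo-kuso} and the proof of \cite[Proposition 2.3]{fedorov-novi},'' and what you have written is exactly that argument spelled out --- the decomposition $H=G|_{[0,\infty)}+(Q-\Psi)$, the appeal to Theorem~\ref{brzo-kuso} for $G$, the verification that $\Psi\in C_{0}([0,\infty):Y)$ via the block estimate and Stepanov $p$-boundedness, and the absorption $Q-\Psi\in{\mathcal F}_{Y}$ via \eqref{mravojed}.

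One minor expository correction: in your final paragraph you call \eqref{integr} ``the weaker hypothesis'' and say the $C_{0}$-decay of $\Psi$ is ``what fails'' under it. This is not accurate. Condition \eqref{integr} is in fact the more restrictive integrability assumption (it is \eqref{isti-ee} that may fail to imply \eqref{integr} when $\beta-\gamma\geq -1$), and under \eqref{integr} the decay $\Psi\in C_{0}$ would follow just as easily by dominated convergence. The point of the paper is not that $\Psi\in C_{0}$ requires $\gamma>1$, but rather that the whole analysis --- including Theorem~\ref{brzo-kuso} --- can be carried out under \eqref{isti-ee} without ever invoking \eqref{integr}. You should drop or rephrase that sentence; the mathematics preceding it is fine.
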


\begin{rem}\label{south}
Suppose that ${\mathcal F}_{Y}=B^{p}_{0}([0,\infty):Y).$ Since the sum of spaces ${\mathrm D}^{p}(I : Y)$ and $B^{p}_{0}(I:Y)$ is ${\mathrm D}^{p}(I : Y)$ (see the proof of \cite[Proposition 2.6]{NSJOM-besik} for the case that $I=[0,\infty)$), the extension of a function from $B^{p}_{0}([0,\infty):Y)$ by zero outside the interval $[0,\infty)$ belongs to the space $B^{p}_{0}({\mathbb R}:Y)$, and \eqref{mravojed} holds, we get that the resulting function $H(\cdot)$ belongs to the space ${\mathrm D}^{[0,\infty),p}(Y),$ as well.
It is also worth noting that it is not clear whether a Doss $p$-almost periodic function defined on $[0,\infty)$ can be extended to a Doss $p$-almost periodic function defined on ${\mathbb R}$.
\end{rem}

Concerning the Besicovitch-Doss $p$-almost periodic functions, the situation is a little bit delicate. It seems that the estimate \eqref{isti-ee} alone is not sufficiently enough to ensure the validity of condition \eqref{idiotishen} for the function $G(\cdot)$ defined through \eqref{profa}. In the following theorem, we impose the additional condition \eqref{montenegro} for the inhomogeneity $g(\cdot)$ under our consideration:

\begin{thm}\label{brzo-kuso1}
Let $1/p+1/q=1$ and let $(R(t))_{t>0}\subseteq L(X,Y)$ satisfy \eqref{isti-ee}. Let a function $g : {\mathbb R} \rightarrow X$ be Besicovitch-Doss $p$-almost periodic and Stepanov $p$-bounded, and let $q(\beta-1)>-1$ provided that $p>1$, resp. $\beta=1,$ provided that $p=1$. 
Suppose, additionally, that for every $\lambda \in {\mathbb R}$ and $\epsilon>0$ there exists a number $l_{0}>0$ such that
\begin{align}\label{montenegro}
\frac{1}{l}\Biggl\| \Biggl( \int^{l}_{0}-\int^{l-v}_{-v}\Biggr)e^{i\lambda s}g(s)\, ds\Biggr\|^{p}<\epsilon,\quad l\geq l_{0},\ v\geq 0.
\end{align}
Then the function
$G : {\mathbb R} \rightarrow Y,$ defined through \eqref{profa}, is bounded continuous and Besicovitch-Doss $p$-almost periodic. 
\end{thm}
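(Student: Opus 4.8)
The plan is to follow the structure of the proof of Theorem~\ref{brzo-kuso} verbatim for the parts that establish boundedness, continuity, and Doss $p$-almost periodicity of $G(\cdot)$: indeed, under the stated hypotheses $g(\cdot)$ is in particular Doss $p$-almost periodic and Stepanov $p$-bounded, so Theorem~\ref{brzo-kuso} already gives all of this, and it remains only to verify the additional Besicovitch condition \eqref{idiotishen} for $G(\cdot)$. So the entire work of this proof is the verification of \eqref{idiotishen}, using the extra hypothesis \eqref{montenegro}.

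First I would fix $\lambda \in {\mathbb R}$ and expand the inner double integral. Writing $G(s)=\int_{-\infty}^{s}R(s-\sigma)g(\sigma)\,d\sigma=\int_{0}^{\infty}R(v)g(s-v)\,dv$ after the substitution $v=s-\sigma$, I would insert this into $\bigl(\int_{x}^{x+l}-\int_{0}^{l}\bigr)e^{i\lambda s}G(s)\,ds$ and swap the order of integration via Fubini (justified by the Stepanov $p$-boundedness of $g$ together with the integrability properties of $R(\cdot)$ coming from \eqref{isti-ee} and the constraint $q(\beta-1)>-1$, exactly as in the cited references). This produces
\begin{align*}
\Biggl( \int^{x+l}_{x}-\int^{l}_{0}\Biggr) e^{i\lambda s}G(s)\, ds = \int_{0}^{\infty}R(v)e^{i\lambda v}\Biggl[\Biggl( \int^{x+l-v}_{x-v}-\int^{l-v}_{-v}\Biggr)e^{i\lambda \sigma}g(\sigma)\,d\sigma\Biggr]\,dv.
\end{align*}
The point is that, for each fixed $v\geq 0$, the bracketed term splits as $\bigl(\int_{x-v}^{x+l-v}-\int_{0}^{l}\bigr)e^{i\lambda\sigma}g(\sigma)\,d\sigma$ plus the correction $\bigl(\int_{0}^{l}-\int_{-v}^{l-v}\bigr)e^{i\lambda\sigma}g(\sigma)\,d\sigma$; the first piece is controlled, after an $x$-average, by the Besicovitch-Doss condition \eqref{idiotishen} for $g$ itself (shifted by $v$, which is absorbed using a Lemma~\ref{ravi-doss}-type translation invariance of the relevant $\limsup$), while the second piece is exactly the quantity appearing in \eqref{montenegro}, which is $<\epsilon^{1/p} l^{1/p}$ for $l$ large, uniformly in $v\geq 0$.

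Then I would take the $L^{p}_{x}$-norm in $dx$ over $[-t,t]$, divide by $2t$ and by $l$, apply Minkowski's integral inequality to pull the $v$-integral outside, bound $\|R(v)\|_{L(X,Y)}$ using \eqref{isti-ee}, and use Lemma~\ref{element} to move the $1/p$-power through the $\limsup_{t\to+\infty}$, together with the reverse Fatou lemma to move $\limsup_{t\to+\infty}$ inside the $dv$-integral, just as in the proof of Theorem~\ref{brzo-kuso}. The contribution of the first piece then tends to $0$ as $l\to+\infty$ by \eqref{idiotishen} for $g$, after checking that $\int_{0}^{\infty}\|R(v)\|_{L(X,Y)}\,dv<\infty$ and that the $v$-shift does not affect the limit; the contribution of the second piece is bounded by $C\epsilon^{1/p}$ for $l\geq l_{0}$ because of \eqref{montenegro} and again $\int_{0}^{\infty}\|R(v)\|_{L(X,Y)}\,dv<\infty$. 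Letting $\epsilon\downarrow 0$ gives \eqref{idiotishen} for $G(\cdot)$ and completes the proof. For $p=1$ the factor $1+|v|^{\zeta}$ weight used in Theorem~\ref{brzo-kuso} is replaced by the direct estimate $\beta=1$, exactly as indicated there.

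I expect the main obstacle to be the bookkeeping in the first piece: namely, showing that after the $x$-average the term $\frac{1}{l}\bigl[\frac{1}{2t}\int_{-t}^{t}\|\int_{0}^{\infty}R(v)e^{i\lambda v}(\int_{x-v}^{x+l-v}-\int_{0}^{l})e^{i\lambda\sigma}g(\sigma)\,d\sigma\,dv\|^{p}\,dx\bigr]^{1/p}$ is dominated, up to the constant $\int_{0}^{\infty}\|R(v)\|\,dv$, by the corresponding expression for $g$ with $x$ replaced by $x-v$ — this requires interchanging the $dv$-integral with the $dx$-average and the $\limsup_t$ in the right order (Minkowski, then Fubini, then reverse Fatou), and then invoking the translation invariance of the quantity in \eqref{idiotishen} under the shift $x\mapsto x-v$, which is the analogue of Lemma~\ref{ravi-doss} for this functional and must be checked (it follows by a change of variables in the $dx$-integral combined with the elementary fact that altering the integration window $[-t,t]$ by a fixed amount $v$ does not change the $\limsup$ as $t\to+\infty$). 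Once that interchange is justified, everything else is routine.
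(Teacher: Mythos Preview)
Your plan is correct and matches the paper's approach in structure: invoke Theorem~\ref{brzo-kuso} for boundedness, continuity, Doss $p$-almost periodicity and $B^{p}$-continuity, then verify \eqref{idiotishen} for $G(\cdot)$ by writing $G(s)=\int_{0}^{\infty}R(v)g(s-v)\,dv$, pushing the $x$-average and $\limsup_{t}$ inside the $dv$-integral (Fubini, Lemma~\ref{element}, reverse Fatou), shifting $x\mapsto x-v$ via Lemma~\ref{ravi-doss}, and splitting $\bigl(\int_{x}^{x+l}-\int_{-v}^{l-v}\bigr)$ into the piece $\bigl(\int_{x}^{x+l}-\int_{0}^{l}\bigr)$ controlled by \eqref{idiotishen} for $g$ and the piece $\bigl(\int_{0}^{l}-\int_{-v}^{l-v}\bigr)$ controlled by \eqref{montenegro}.

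The only genuine difference is in how you pass from the $Y$-norm of the $dv$-integral to a $dv$-integral of norms. The paper inserts the weight $(1+v^{\zeta})^{-1}$ with $\frac{1}{p}<\zeta<\frac{1}{p}+\gamma-\beta$ and applies H\"older, exactly as in the proof of Theorem~\ref{brzo-kuso}; you instead apply Minkowski's integral inequality directly and use that $\int_{0}^{\infty}\|R(v)\|\,dv<\infty$ under \eqref{isti-ee} (which is true: near $0$ one has $v^{\beta-1}\in L^{1}$ since $\beta>0$, and near $\infty$ one has $v^{\beta-1-\gamma}\in L^{1}$ since $\beta\leq 1<\gamma$). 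Your route is slightly more economical and makes the final sentence about the $p=1$ case and the $(1+|v|^{\zeta})$ weight unnecessary---in your Minkowski argument no weight ever appears, so that remark can simply be dropped. Either device yields the same two-term estimate and the same conclusion.
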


\begin{proof}
Let $\lambda \in {\mathbb R}$. By Theorem \ref{brzo-kuso}, we only need to show that the function $G(\cdot)$ satisfies \eqref{idiotishen}. In order to that, choose a positive real number $\zeta >0$ 
satisfying
$
\frac{1}{p}<\zeta <\frac{1}{p}+\gamma-\beta
$
(in the case that $p=1,$ we can take any number $\zeta \in (1,\gamma)$ and repeat the same procedure, again). 
Arguing similarly as in the proof of Theorem \ref{brzo-kuso}, we obtain that:
\begin{align*}
&\frac{1}{l}\Biggl[\frac{1}{2t}\int^{t}_{-t}\Biggl\| \Biggl( \int^{x+l}_{x}-\int^{l}_{0}\Biggr) e^{i\lambda s}G(s)\, ds \Biggr \|^{p}\, dx\Biggr]^{1/p}
\\ \leq  & \frac{M}{l}\Biggl[\frac{1}{2t}\int^{t}_{-t}\Biggl\| \int^{\infty}_{0} \Biggl( \int^{x+l}_{x}-\int^{l}_{0}\Biggr) e^{i\lambda s}g(s-v)\, ds \frac{ v^{\beta-1} dv}{1+v^{\gamma}}\Biggr \|^{p}\, dx\Biggr]^{1/p}
\\ \leq & \frac{M}{l}\Biggl[\frac{1}{2t}\int^{t}_{-t}\Biggl\{ \Biggl\| \frac{1}{1+\cdot^{\zeta}}\Biggl( \int^{x+l}_{x}-\int^{l}_{0}\Biggr) e^{i\lambda s}g(s-\cdot)\, ds\Biggr\|_{L^{p}[0,\infty)} 
\\ \times & \Biggl\|  \frac{ \cdot^{\beta-1}(1+\cdot^{\zeta})}{1+\cdot^{\gamma}}\Biggr\|_{L^{q}[0,\infty)}\Biggr \}^{p}\, dx\Biggr]^{1/p}
\\ \leq & \frac{M}{l}\Biggl[\frac{1}{2t}\int^{t}_{-t}\int^{\infty}_{0}\frac{1}{(1+v^{\zeta})^{p}}\Biggl\| \Biggl( \int^{x+l}_{x}-\int^{l}_{0}\Biggr) e^{i\lambda s}g(s-v)\, ds \Biggr \|^{p}\, dv \, dx\Biggr]^{1/p}
\\ = & \frac{M}{l}\Biggl[\frac{1}{2t}\int^{\infty}_{0}\int^{t}_{-t}\frac{1}{(1+v^{\zeta})^{p}}\Biggl\| \Biggl( \int^{x+l}_{x}-\int^{l}_{0}\Biggr) e^{i\lambda s}g(s-v)\, ds \Biggr \|^{p}\, dx \, dv\Biggr]^{1/p};
\end{align*}
here, $M$ denotes the constant from \eqref{isti-ee}.
Applying the reverse Fatou lemma and Lemma \ref{element}, the above implies:
\begin{align*}
&\limsup_{t\rightarrow +\infty}\frac{1}{l}\Biggl[\frac{1}{2t}\int^{t}_{-t}\Biggl\| \Biggl( \int^{x+l}_{x}-\int^{l}_{0}\Biggr) e^{i\lambda s}G(s)\, ds \Biggr \|^{p}\, dx\Biggr]^{1/p}
\\ \leq & \frac{M}{l}\Biggl[\frac{1}{2t}\int^{\infty}_{0}\int^{t}_{-t}\limsup_{t\rightarrow +\infty}\Biggl\{\frac{1}{2t}\int^{t}_{-t} \Biggl\| \Biggl( \int^{x+l}_{x}-\int^{l}_{0}\Biggr) e^{i\lambda s}g(s-v)\, ds \Biggr \|^{p}\, dx \Biggr\} \, \frac{dv}{(1+v^{\zeta})^{p}}\Biggr]^{1/p}
\\ = & \frac{M}{l}\Biggl[\frac{1}{2t}\int^{\infty}_{0}\int^{t}_{-t}\limsup_{t\rightarrow +\infty}\Biggl\{\frac{1}{2t}\int^{t}_{-t} \Biggl\| \Biggl( \int^{x+l-v}_{x-v}-\int^{l-v}_{-v}\Biggr) e^{i\lambda s}g(s)\, ds \Biggr \|^{p}\, dx \Biggr\} \, \frac{dv}{(1+v^{\zeta})^{p}}\Biggr]^{1/p}.
\end{align*}
Taking into account Lemma \ref{ravi-doss}, we get from the above:
\begin{align*}
&\limsup_{t\rightarrow +\infty}\frac{1}{l}\Biggl[\frac{1}{2t}\int^{t}_{-t}\Biggl\| \Biggl( \int^{x+l}_{x}-\int^{l}_{0}\Biggr) e^{i\lambda s}G(s)\, ds \Biggr \|^{p}\, dx\Biggr]^{1/p}
\\ \leq & 
\frac{M}{l}\Biggl[\frac{1}{2t}\int^{\infty}_{0}\int^{t}_{-t}\limsup_{t\rightarrow +\infty}\Biggl\{\frac{1}{2t}\int^{t}_{-t} \Biggl\| \Biggl( \int^{x+l}_{x}-\int^{l-v}_{-v}\Biggr) e^{i\lambda s}g(s)\, ds \Biggr \|^{p}\, dx \Biggr\} \, \frac{dv}{(1+v^{\zeta})^{p}}\Biggr]^{1/p}
\\ \leq & \frac{2^{\frac{p-1}{p}}M}{l}\Biggl[\frac{1}{2t}\int^{\infty}_{0}\int^{t}_{-t}\limsup_{t\rightarrow +\infty}\Biggl\{\frac{1}{2t}\int^{t}_{-t} \Biggl\| \Biggl( \int^{x+l}_{x}-\int^{l}_{0}\Biggr) e^{i\lambda s}g(s)\, ds \Biggr \|^{p}\, dx \Biggr\} \, \frac{dv}{(1+v^{\zeta})^{p}}\Biggr]^{1/p} 
\\ + &\frac{2^{\frac{p-1}{p}}M}{l}\Biggl[\frac{1}{2t}\int^{\infty}_{0}\int^{t}_{-t}\limsup_{t\rightarrow +\infty}\Biggl\{\frac{1}{2t}\int^{t}_{-t} \Biggl\| \Biggl( \int^{l}_{0}-\int^{l-v}_{-v}\Biggr) e^{i\lambda s}g(s)\, ds \Biggr \|^{p}\, dx \Biggr\} \, \frac{dv}{(1+v^{\zeta})^{p}}\Biggr]^{1/p}.
\end{align*}
Take now any $\epsilon>0;$ then there exists $l_{0}>$ such that \eqref{montenegro} holds. Furthermore, the equation \eqref{idiotishen} holds with $f=g$ so that there exists $l_{1}>0$ such that
\begin{align*}
\frac{1}{l}\Biggl[\frac{1}{2t}\int^{\infty}_{0}\int^{t}_{-t}\limsup_{t\rightarrow +\infty}\Biggl\{\frac{1}{2t}\int^{t}_{-t} \Biggl\| \Biggl( \int^{x+l}_{x}-\int^{l}_{0}\Biggr) e^{i\lambda s}g(s)\, ds \Biggr \|^{p}\, dx \Biggr\} \, \frac{dv}{(1+v^{\zeta})^{p}}\Biggr]^{1/p}<\epsilon
\end{align*}
for all $l\geq l_{1}.$ For $l\geq l_{0},$ the estimate \eqref{montenegro} yields that  
\begin{align*}
\frac{1}{l}\Biggl[\frac{1}{2t}\int^{\infty}_{0}\int^{t}_{-t}\limsup_{t\rightarrow +\infty}\Biggl\{\frac{1}{2t}\int^{t}_{-t} \Biggl\| \Biggl( \int^{l}_{0}-\int^{l-v}_{-v}\Biggr) e^{i\lambda s}g(s)\, ds \Biggr \|^{p}\, dx \Biggr\} \, \frac{dv}{(1+v^{\zeta})^{p}}\Biggr]^{1/p}<\epsilon,
\end{align*}
finishing the proof.
\end{proof}

Now it is quite simple to state the following analogue of Proposition \ref{brzo-kusowas} and Remark \ref{south}:

\begin{prop}\label{brzo-kusowas1}
Let $q\in L^{p}_{loc}([0,\infty) : X),$ $1/p+1/q=1$ and let $(R(t))_{t>0}\subseteq L(X,Y)$ satisfy \eqref{isti-ee}. Let a function $g : {\mathbb R} \rightarrow X$ be Besicovitch-Doss $p$-almost periodic and Stepanov $p$-bounded, and let $q(\beta-1)>-1$ provided that $p>1$, resp. $\beta=1,$ provided that $p=1$. Suppose, additionally, that for every $\lambda \in {\mathbb R}$ and $\epsilon>0$ there exists a number $l_{0}>0$ such that \eqref{montenegro} holds as well as that
the function $Q(\cdot)$ defined through \eqref{preb} 
belongs to some space ${\mathcal F}_{Y}$ of functions $[0,\infty)\rightarrow Y,$ satisfying that \eqref{mravojed} holds.
Then the function $H(\cdot),$ defined through \eqref{hakala},
is continuous and 
belongs to the class ${\mathrm B}^{[0,\infty),p}(Y)+{\mathcal F}_{Y}$, where ${\mathrm B}^{[0,\infty),p}(Y)$ stands for the space of all restictions of $Y$-valued Besicovitch-Doss $p$-almost periodic functions from the real line to the interval $[0,\infty).$
\end{prop}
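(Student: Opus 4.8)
The plan is to reduce Proposition \ref{brzo-kusowas1} to Theorem \ref{brzo-kuso1} in exactly the same way that Proposition \ref{brzo-kusowas} was reduced to Theorem \ref{brzo-kuso}. First I would write $H(t) = G(t) + Q(t) - \int_{-\infty}^{0} R(t-s) g(s)\, ds$ for $t \geq 0$, where $G(\cdot)$ is the infinite convolution product from \eqref{profa} and $Q(\cdot)$ is the finite convolution product of $R$ with $q$ from \eqref{preb}. By Theorem \ref{brzo-kuso1} the function $G(\cdot)$ is bounded, continuous and Besicovitch-Doss $p$-almost periodic on $\mathbb{R}$; restricting to $[0,\infty)$ places it in ${\mathrm B}^{[0,\infty),p}(Y)$. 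By hypothesis $Q(\cdot) \in \mathcal{F}_Y$. So the whole burden of the argument is to show that the remaining term
$$
t \mapsto \int_{-\infty}^{0} R(t-s)g(s)\, ds = \int_{t}^{\infty} R(u) g(t-u)\, du, \quad t \geq 0,
$$
belongs to $C_{0}([0,\infty) : Y)$; once that is known, \eqref{mravojed} gives $H(\cdot) \in {\mathrm B}^{[0,\infty),p}(Y) + \mathcal{F}_Y$, and continuity of $H(\cdot)$ follows from continuity of each summand (the boundedness/continuity of $G$ from Theorem \ref{brzo-kuso1}, continuity of $Q$ from the proof of \cite[Proposition 2.3]{fedorov-novi}, and continuity of the tail term below).

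Next I would verify the $C_0$-decay of the tail term, following the proof of \cite[Proposition 2.3]{fedorov-novi}. Using the growth estimate \eqref{isti-ee} together with Stepanov $p$-boundedness of $g(\cdot)$ and Hölder's inequality with the auxiliary exponent $\zeta$ satisfying $\tfrac1p < \zeta < \tfrac1p + \gamma - \beta$ (or $\zeta \in (1,\gamma)$ when $p=1$), one bounds
$$
\Biggl\| \int_{t}^{\infty} R(u) g(t-u)\, du \Biggr\| \leq M \Biggl[ \int_{t}^{\infty} \frac{\|g(t-u)\|^{p}}{(1+u^{\zeta})^{p}}\, du \Biggr]^{1/p} \Biggl\| \frac{\cdot^{\beta-1}(1+\cdot^{\zeta})}{1+\cdot^{\gamma}} \Biggr\|_{L^{q}[0,\infty)},
$$
and the $L^q$-factor is finite precisely because of the choice of $\zeta$ and the assumption $q(\beta-1) > -1$. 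Since $g(\cdot)$ is Stepanov $p$-bounded, $\int_{t}^{\infty}\|g(t-u)\|^{p}(1+u^{\zeta})^{-p}\, du$ is dominated by $\|g\|_{S^p}^{p}$ times $\sum_{k \geq \lfloor t\rfloor}(1+k^{\zeta})^{-p}$ up to a constant, which tends to $0$ as $t \to +\infty$ because $p\zeta > 1$. Hence the tail term vanishes at infinity; its continuity on $[0,\infty)$ is routine from dominated convergence using the same integrable majorant. This is exactly the content of the $C_0$-decay argument in \cite{fedorov-novi}, now applied with the present growth rate \eqref{isti-ee}.

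I do not expect a genuine obstacle here: the proposition is essentially a bookkeeping corollary combining Theorem \ref{brzo-kuso1} with the decomposition $H = G|_{[0,\infty)} + Q - (\text{tail})$ and the absorption property \eqref{mravojed}. The one point requiring a little care — the analogue of the delicate issue flagged in Remark \ref{south} — is to make sure the restriction of a Besicovitch-Doss $p$-almost periodic function on $\mathbb{R}$ to $[0,\infty)$ genuinely lands in the class ${\mathrm B}^{[0,\infty),p}(Y)$ as defined; this is true by the very definition of ${\mathrm B}^{[0,\infty),p}(Y)$ as the space of such restrictions, so no extension result (which is unavailable) is needed. The only quantitative estimate to double-check is the finiteness of the $L^{q}$-norm of $u^{\beta-1}(1+u^{\zeta})/(1+u^{\gamma})$ near $0$ (needs $q(\beta-1)>-1$, handled by the hypothesis, with the case $p=1$, $\beta=1$ treated separately as in Theorem \ref{brzo-kuso1}) and near $\infty$ (needs $q(\beta + \zeta - \gamma) < -1$, i.e. $\zeta < \tfrac1p + \gamma - \beta$, which is the stated constraint on $\zeta$). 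Collecting these facts completes the proof.
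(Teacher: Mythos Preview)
Your proposal is correct and follows precisely the route the paper intends: the paper does not give a separate proof of Proposition~\ref{brzo-kusowas1} but states it as the direct analogue of Proposition~\ref{brzo-kusowas}, obtained by combining Theorem~\ref{brzo-kuso1} with the decomposition $H=G|_{[0,\infty)}+Q-(\text{tail})$ and the $C_{0}$-decay argument from \cite[Proposition~2.3]{fedorov-novi}. Your detailed verification of the tail estimate and the integrability conditions on $\zeta$ is exactly what the paper leaves implicit.
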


\begin{rem}\label{south1}
Suppose that ${\mathcal F}_{Y}=B^{p}_{0}([0,\infty):Y).$ Then the resulting function $H(\cdot)$ belongs to the space ${\mathrm B}^{[0,\infty),p}(Y).$ 
It is not clear whether a Besicovitch-Doss $p$-almost periodic function defined on $[0,\infty)$ can be extended to a Besicovitch-Doss $p$-almost periodic function defined on ${\mathbb R},$ as well.
\end{rem}

\subsection[Some  applications]{Some applications}\label{prcko-duo}

Basically, our results can be applied at any place where the variation of parameters formula or some of its generalizations can be applied. 
For example, 
we can incorporate our results in the analysis of abstract fractional inclusions with multivalued linear operators ${\mathcal A}$ satisfying the condition (P) introduced by A. Favini and A. Yagi \cite[p. 47]{faviniyagi}.
Define the resolvent operator families $(S_{\gamma}(t))_{t>0}$ and $(R_{\gamma}(t))_{t>0}$ generated by ${\mathcal A}$ as in \cite{nova-mono} and \cite{fedorov-novi}. Let $x_{0}$ be a point of continuity of $(S_{\gamma}(t))_{t>0},$ i.e., $\lim_{t\rightarrow 0+}S_{\gamma}(t)x_{0}=x_{0}.$
Then there exist two finite constants $M_{1}>0$ and $M_{2}>0$ such that
\begin{align}\label{druga-est}
\bigl\| R_{\gamma}(t) \bigr\|\leq M_{1}t^{\gamma \beta -1},\ t\in (0,1] \ \ \mbox{ and }\ \ \bigl\| R_{\gamma}(t) \bigr\|\leq M_{2}t^{-1-\gamma},\ t\geq 1.
\end{align}

Let $\gamma \in (0,1).$ A continuous function $u: {\mathbb R} \rightarrow X$ is said to be a mild solution of the abstract fractional relaxation inclusion
\begin{align}\label{inkpow}
D_{t,+}^{\gamma}u(t)\in -{\mathcal A}u(t)+f(t),\ t\in {\mathbb R}
\end{align}
iff 
$$
u(t)=\int^{t}_{-\infty}R_{\gamma}(t-s)f(s)\, ds,\ t\in {\mathbb R}.
$$
A mild solution of the abstract fractional relaxation inclusion
\[
\hbox{(DFP)}_{f,\gamma} : \left\{
\begin{array}{l}
{\mathbf D}_{t}^{\gamma}u(t)\in {\mathcal A}u(t)+f(t),\ t> 0,\\
\quad u(0)=x_{0},
\end{array}
\right.
\]
is any function $u\in C([0,\infty) : X)$ satisfying that 
\begin{align*}
u(t)=S_{\gamma}(t)x_{0}+\int^{t}_{0}R_{\gamma}(t-s)f(s)\, ds,\quad t\geq 0.
\end{align*}

Keeping in mind the estimate \eqref{druga-est} and the fact that $\lim_{t\rightarrow +\infty}\|S_{\gamma}(t)\|=0,$ it is clear how we can apply Theorem \ref{brzo-kuso}
and Proposition \ref{brzo-kusowas}
in the analysis of existence and uniqueness of Doss $p$-almost periodic solutions (Besicovitch-Doss $p$-almost periodic solutions) of the abstract fractional inclusions \eqref{inkpow} and 
(DFP)$_{f,\gamma}.$ These results can be simply incorporated in the study of qualitative properties of solutions of the fractional Poisson heat type equations with the Dirichlet Laplacian $\Delta$ in $L^{p}(\Omega),$ where $\Omega$ is an open bounded region in ${\mathbb R}^{n}$ (see \cite{nova-mono} and \cite{fedorov-novi} for more details). Concerning applications of $C$-regularized semigroups (see \cite{knjigah} and references cited therein for more details on the subject), it should be emphasized that we are in aposition to analyze the existence and uniqueness of Doss $p$-almost periodic solutions (Besicovitch-Doss $p$-almost periodic solutions) of 
the initial value problems with constant coefficients
\[
\begin{array}{l}
D_{t,+}^{\gamma}u(t,x)=\sum_{|\alpha|\leq k}a_{\alpha}f^{(\alpha)}(t,x)+f(t,x),\ t\in {\mathbb R},\ x\in \mathbb{R}^n
\end{array}
\]
and
\[\left\{
\begin{array}{l}
{\mathbf D}_{t}^{\gamma}u(t,x)=\sum_{|\alpha|\leq k}a_{\alpha}f^{(\alpha)}(t,x)+f(t,x),\ t\geq 0,\ x\in \mathbb{R}^n,\\
 u(0,x)=u_{0}(x),\quad x\in \mathbb{R}^n
\end{array}
\right.
\]
in the space $L^{p}(\mathbb{R}^n),$ where $1\leq p<\infty $
and some extra assumptions are satisfied.

\bibliographystyle{amsplain}

\begin{thebibliography}{99}


\bibitem{deda}
\textsc{J. Andres, A. M. Bersani, R. F. Grande,}
\emph{Hierarchy of almost-periodic function spaces,}
Rend. Mat. Appl. (7) {\textbf 26} (2006), 121-188.

\bibitem{besik}
\textsc{A. S. Besicovitch,}
\textit{Almost Periodic Functions,}
Dover Publications Inc., New York, 1954.

\bibitem{doss}
\textsc{R. Doss,}
\textit{On generalized almost periodic functions,} Annals of Math. {\bf 59}
(1954), 477--489.

\bibitem{doss5}
\textsc{R. Doss,}
\textit{On generalized almost periodic functions-II,} J. London Math. Soc.
{\bf 37} (1962), 133--140.

\bibitem{faviniyagi}
\textsc{A. Favini, A. Yagi,} 
{\it Degenerate Differential Equations in Banach Spaces,}
Chapman and Hall/CRC
Pure and Applied Mathematics, New York, 1998.

\bibitem{fedorov-novi}
\textsc{V. Fedorov, M. Kosti\' c,}
\textit{A note on (asymptotically) Weyl-almost periodic 
properties of convolution products,}
Chelyabinsk Phy. Math. J., accepted.

\bibitem{kilbas}
\textsc{A. A. Kilbas, H. M. Srivastava, J. J. Trujillo,}
\textit{Theory and Applications of Fractional Differential Equations,}
Elsevier Science B.V., Amsterdam, 2006.

\bibitem{knjigah}
\textsc{M. Kosti\'c,}
\textit{Generalized Semigroups and Cosine Functions,}
Mathematical Institute SANU, Belgrade, 2011.

\bibitem{knjigaho}
\textsc{M. Kosti\'c,}
\textit{Abstract Volterra Integro-Differential
Equations,}
Taylor and Francis Group/CRC Press, Boca Raton, Fl., 2015.

\bibitem{NSJOM-besik}
\textsc{M. Kosti\'c},
\emph{On Besicovitch-Doss almost periodic solutions of abstract Volterra integro-differential equations,}
Novi Sad J. Math. {\bf 47} (2017), 187--200.

\bibitem{nova-mono}
{\sc M. Kosti\' c,}
{\it Almost Periodic and Almost Automorphic Solutions to Integro-Differential Equations,} W. de Gruyter, accepted.

\bibitem{relaxation-peng}
\textsc{J. Mu, Y. Zhoa, L. Peng,}
\emph{Periodic solutions and $S$-asymptotically periodic solutions to
fractional evolution equations,} Discrete Dyn. Nat. Soc.,
Volume 2017, Article ID 1364532, 12 pages,
https://doi.org/10.1155/2017/1364532.

\end{thebibliography}

\end{document}